\documentclass[12pt]{article}
\usepackage{natbib}
\usepackage{amssymb,latexsym}
\usepackage{graphicx,amsmath,amsfonts,amsthm}
\usepackage{color}
\usepackage{graphicx}
\usepackage{etex}
\usepackage{morefloats}
\usepackage{float}

\oddsidemargin    0in
\evensidemargin   0in
\topmargin       -0.45in
\textwidth        6.3in
\textheight       8.8in

\input prepictex
\input postpictex
\input pictexwd

\newcommand{\bpic}[4]{\beginpicture
  \setcoordinatesystem units  <1pt,1pt>
  \setplotarea x from #1 to #2, y from #3 to #4}
\newcommand{\epic}{\endpicture}
\newcommand{\hl}[3]{\put{\line(1,0){#1}} [Bl] at #2 #3 }
\newcommand{\vl}[3]{\put{\line(0,1){#1}} [Bl] at #2 #3 }

\newcommand{\bull}[2]{\put{$\bullet$} at #1 #2 }

\newtheorem{proposition}{Proposition}

\newtheorem{theorem}{Theorem}
\newtheorem{example}{Example}
\newtheorem{definition}{Definition}


\newcommand{\cd}{{\mathcal D}}
\newcommand{\ce}{{\mathcal E}}

\newcommand{\cl}{{\mathcal L}}

\title{\bf A composition of Condorcet domains}
\author{Dominic Keehan and Arkadii Slinko}

\date{}

\begin{document}

\maketitle

\begin{abstract}
Inspecting known maximal Condorcet domains on 4 variables classified by \cite{tobias16} we find that  $9$ out of $18$ of them are created using a certain composition of smaller domains. In this paper we describe this composition. We give sufficient conditions for the composition of two Condorcet domain to be a maximal Condorcet domain.  
\end{abstract}
\vspace{5mm}

\section{The composition $\diamond$}

Let $\cd_1$ and $\cd_2$ be two domains of linear orders on sets $\{1,\ldots,n-1\}$ and $\{2,\ldots,n\}$ of alternatives, respectively. Then we define the $nl$-composition ({\em never-last composition}) of these domains as
\[
\cd_1\diamond \cd_2 = \{un \mid u\in \cd_1\}\cup \{v1 \mid v\in \cd_2\}.
\]
Graphically this domain can be displayed as
\begin{equation*}
\label{matrix}
\cd_1\diamond  \cd_2=\left[\begin{array}{ccccc|cccccccccccccccccc}
&& \cd_1(1,\ldots,n-1)&&&&& \cd_2(2,\ldots,n)&& \\
\hline
n&n&\cdots&n&n&1&1&\cdots&1&1
\end{array}\right],
\end{equation*}
where linear orders of the domain are displayed as columns.

We immediately see that some small domains are $nl$-compositions. For example, the unique domain $(1)\star (2)=\{12,21\}$ (notation from  \cite{DanilovK13}) can also be written as $(1)\diamond  (2)$. There are three maximal domains on three alternatives
\begin{equation*}
\label{eq:three_domains}
\cd_{3,1}=\{123, 312, 132, 321\},\ \cd_{3,2}=\{123,231, 132, 321\},\ \cd_{3,3}=\{123, 213, 231, 321\}. 
\end{equation*}
among which one is completely reducible using never-last joins, namely
\[
\cd_{3,3}(1,2,3)=((1)\diamond (2))\diamond ((2)\diamond (3)), 
\]
where $(i)$ is a trivial linear order on the set of a single alternative $i$.
 
As was demonstrated in \cite{slinko2019} Arrow's single-peaked domain is always an $nl$-decomposition of two smaller Arrow's single-peaked domains. But as we will see the use of this construction is not restricted to Arrow's single-peaked domains. \par\medskip

Here we try to answer the following question: Can this composition help us to construct new maximal Condorcet domains and which domains it  is possible to combine to obtain a new maximal Condorcet domain? To answer we need to introduce the following concept.

\begin{definition}
Let $\cd$ be a Condorcet domain. We say that in a linear order $w= \ldots bc\ldots a \ldots\in \cd$ alternative $a$ is a {\em right  obstruction}  to the swap $bc\to cb$, if $cba$ cannot be potentially in the restriction $\cd|_{\{a,b,c\}}$ of $\cd$ onto $\{a,b,c\}$, that is the domain $\cd|_{\{a,b,c\}}\cup \{cba\}$ is not Condorcet. 
\end{definition}

\begin{proposition}
Let $\cd\subseteq \cl(A)$ be a Condorcet domain and $a,b,c\in A$.  An alternative $a$ is a right  obstruction to the swap $bc\to cb$ if and only if  $\cd|_{\{a,b,c\}}$ satisfies either $cN_{\{a,b,c\}}1$ or $bN_{\{a,b,c\}}2$ (or both) and no other never condition.
\end{proposition}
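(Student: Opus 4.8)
The plan is to reduce the statement to the classical characterization of Condorcet domains on a three-element set of alternatives: a set of linear orders on $\{a,b,c\}$ is a Condorcet domain if and only if it satisfies at least one of the nine never conditions $xNi$ (alternative $x\in\{a,b,c\}$ never occupies position $i\in\{1,2,3\}$). Put $E=\cd|_{\{a,b,c\}}$. Since $\cd$ is a Condorcet domain, so is its restriction $E$, hence $E$ satisfies at least one never condition. Moreover, $a$ being a right obstruction presupposes an order $w=\ldots bc\ldots a\ldots\in\cd$, and such a $w$ restricts to $bca$ on $\{a,b,c\}$, so $bca\in E$; in particular $E$ does not satisfy $bN1$, $cN2$, or $aN3$, since each of these is violated by the order $bca$ itself. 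Finally, by the definition, $a$ is a right obstruction to $bc\to cb$ exactly when $E\cup\{cba\}$ is not a Condorcet domain.

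The argument then hinges on one bookkeeping step. The order $cba$ places $c$ first, $b$ second, and $a$ third, so it is compatible with every one of the nine never conditions except $cN1$, $bN2$, and $aN3$. A never condition holds for a set of orders precisely when it holds for each member, so it is preserved under unions; hence $E\cup\{cba\}$ satisfies a never condition $N$ if and only if $E$ satisfies $N$ and $N\notin\{cN1,bN2,aN3\}$. By the characterization, $E\cup\{cba\}$ therefore fails to be a Condorcet domain --- equivalently, $a$ is a right obstruction --- if and only if every never condition satisfied by $E$ lies in $\{cN1,bN2,aN3\}$.

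It remains to match this with the claimed condition. If $a$ is a right obstruction, every never condition of $E$ lies in $\{cN1,bN2,aN3\}$; but $bca\in E$ excludes $aN3$, so every never condition of $E$ lies in $\{cN1,bN2\}$, and since $E$ is a Condorcet domain at least one of them holds --- which is exactly ``$E$ satisfies $cN1$ or $bN2$ (or both), and no other never condition''. Conversely, that hypothesis says in particular that every never condition of $E$ lies in $\{cN1,bN2\}\subseteq\{cN1,bN2,aN3\}$, so $E\cup\{cba\}$ satisfies no never condition and hence is not a Condorcet domain, i.e.\ $a$ is a right obstruction. (One can note this hypothesis even forces $bca\in E$, since $bca$ is the unique order with $a$ in position $3$ that is compatible with $cN1$, and likewise with $bN2$, so avoiding $aN3$ makes $bca$ unavoidable; thus the converse is not vacuous.) The proof involves no genuine obstacle beyond tracking which of the nine never conditions are destroyed by adjoining $cba$ and which are already precluded by the presence of $bca$; the only external ingredient is the standard three-alternative characterization of Condorcet domains.
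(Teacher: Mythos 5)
Your proof is correct, and it is a genuinely cleaner organization of the same underlying finite check. The paper argues by enumerating orders: it lists the five linear orders on $\{a,b,c\}$ other than $cba$, observes that the only maximal Condorcet subsets containing $bca$ are obtained by deleting $abc$ or $cab$, and identifies these with $bN_{\{a,b,c\}}2$ and $cN_{\{a,b,c\}}1$. You instead enumerate the nine never conditions and track two facts: which conditions are already killed by the forced presence of $bca$ (namely $bN1$, $cN2$, $aN3$) and which are killed by adjoining $cba$ (namely $cN1$, $bN2$, $aN3$), then intersect. What your route buys is a complete proof of the biconditional for an \emph{arbitrary} restriction $E=\cd|_{\{a,b,c\}}$: the paper's argument as written only inspects the two four-order maximal domains and leaves the case of smaller restrictions implicit, whereas your bookkeeping (``$E\cup\{cba\}$ satisfies $N$ iff $E$ satisfies $N$ and $N\notin\{cN1,bN2,aN3\}$'') handles every $E$ at once and makes the ``only if'' direction airtight. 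Your parenthetical note that the hypothesis forces $bca\in E$, so the converse is not vacuous, is a detail the paper also glosses over. The one external ingredient you lean on --- that a set of orders on three alternatives is Condorcet iff it satisfies at least one never condition --- is the same classical fact the paper uses implicitly, so nothing new is being assumed.
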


\begin{proof}
If we allow $bca$ but not $cba$ in $\cd|_{\{a,b,c\}}$ overall we might have four more linear orders listed as columns of the following matrix
\[
\left[\begin{array}{ccccc}
b&b&a&a&c\\
c&a&b&c&a\\
a&c&c&b&b
\end{array}\right]
\]
Only the third and the fifth column being removed give us a Condorcet domain. These domains satisfy $cN_{\{a,b,c\}}1$ or $bN_{\{a,b,c\}}2$, respectively. Either of the two preclude order $cba$.
\end{proof}

\begin{theorem}
\label{thm:maxmax}
If $\cd_1$ and $\cd_2$ are Condorcet domains on $[n-1]$ and $[n]\setminus \{1\}$, respectively, such that  $\ce=(\cd_1)_{-(n-1)} \cup (\cd_2)_{-1}$ is also Condorcet. 
Suppose $n$ is not a right obstruction in $\cd_2$ and $1$ is not a right obstruction in $\cd_1$. Then $\cd=\cd_1\diamond  \cd_2$ is also a Condorcet domain. Moreover, if $\cd_1$ and $\cd_2$ are maximal and ample, then $\cd$  is also maximal and ample. If $\cd_1$ and $\cd_2$ are copious, then $\cd$ is copious as well. 
\end{theorem}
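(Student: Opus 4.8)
The plan is to prove first that $\cd=\cd_1\diamond\cd_2$ is a Condorcet domain on $[n]$, and then to obtain maximality, ampleness and copiousness from the way it is built. For the first part I will use the standard fact that a domain is Condorcet iff each of its restrictions to a three-element set $T$ satisfies a never-condition. The structural observation that drives everything is that every order of $\cd$ ends in $n$ (if it comes from $\cd_1$) or in $1$ (if it comes from $\cd_2$), so each three-element restriction can be written out explicitly, sometimes with $n$ or $1$ forced into last place. Accordingly I split the triples $T\subseteq[n]$ into four types: (a) $T\subseteq\{2,\ldots,n-1\}$; (b) $T=\{1,b,c\}$ with $b,c\in\{2,\ldots,n-1\}$; (c) $T=\{a,b,n\}$ with $a,b\in\{2,\ldots,n-1\}$; (d) $T=\{1,n,x\}$ with $x\in\{2,\ldots,n-1\}$.

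Types (a) and (d) are free. In (d) the lowest of $\{1,n,x\}$ in any order of $\cd$ is $n$ or $1$, never $x$, so ``$x$ never last'' holds in $\cd|_T$. In (a) both $1$ and $n$ disappear under restriction, so $\cd|_T=\cd_1|_T\cup\cd_2|_T$, which is Condorcet by the hypothesis on $\ce$. Types (b) and (c) are symmetric; I describe (b). Here $\cd|_T=\cd_1|_{\{1,b,c\}}\cup S$ with $S=\{w1\mid w\in\cd_2|_{\{b,c\}}\}$ --- one or two orders, all with $1$ in last place --- so the point is to show that adjoining $S$ to the Condorcet domain $\cd_1|_{\{1,b,c\}}$ cannot kill its never-condition. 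This is exactly what ``$1$ is not a right obstruction in $\cd_1$'' is for: read through the Proposition, it says that adjoining an order with $1$ last to any $\cd_1|_{\{1,b,c\}}$ keeps it Condorcet, and a short finite check on three-element domains turns this into the statement that $\cd_1|_{\{1,b,c\}}$ has one of the never-conditions ``$1$ never first'', ``$1$ never second'', ``$b$ never last'', ``$c$ never last'' --- each of which is stable under placing $1$ at the bottom. Retaining that condition, $\cd|_T$ is Condorcet. The hypothesis ``$n$ is not a right obstruction in $\cd_2$'' plays the same part for (c).

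I expect (b)/(c) to be the main obstacle, and within it the subcase where $\cd_1$ and $\cd_2$ disagree on the pair $\{b,c\}$, so that both $bc1$ and $cb1$ get adjoined simultaneously: that $\cd_1|_{\{1,b,c\}}\cup\{bc1\}$ and $\cd_1|_{\{1,b,c\}}\cup\{cb1\}$ are each Condorcet does not by itself force their common extension to be, which is why one needs the sharper conclusion pinning down a single never-condition that survives both adjunctions. By contrast, the hypothesis on $\ce$ has the merely bookkeeping role of supplying a never-condition for the triples contained in the common alternatives $\{2,\ldots,n-1\}$.

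For the remaining claims, note that restriction to $[n-1]$ leaves the $\cd_1$-orders untouched, so $\cd|_{[n-1]}\supseteq\cd_1$, and likewise $\cd|_{\{2,\ldots,n\}}\supseteq\cd_2$. Let $w\notin\cd$ with $\cd\cup\{w\}$ Condorcet. If $w$ ends in $n$, say $w=un$, then $\cd_1\cup\{u\}\subseteq(\cd\cup\{w\})|_{[n-1]}$ is Condorcet, so maximality of $\cd_1$ gives $u\in\cd_1$ and thus $w\in\cd$, a contradiction; if $w$ ends in $1$ argue symmetrically with $\cd_2$. If $w$ ends in some $x\in\{2,\ldots,n-1\}$, then ampleness of $\cd_1$ and $\cd_2$ makes $\cd|_{\{1,n,x\}}$ contain all four of $1xn,x1n,nx1,xn1$, so ``$x$ never last'' is its only never-condition, and adjoining $w$ (which puts $x$ last in this triple) leaves $\cd\cup\{w\}$ without any never-condition on $\{1,n,x\}$ --- again a contradiction. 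Hence $\cd$ is maximal. For ampleness of $\cd$: pairs inside $[n-1]$ are handled by ampleness of $\cd_1$, pairs inside $\{2,\ldots,n\}$ by ampleness of $\cd_2$, and $\{1,n\}$ because $n$ lies below $1$ in every $\cd_1$-order while $1$ lies below $n$ in every $\cd_2$-order. For copiousness: if $T$ is not of type (d) then $\cd|_T$ contains a copy of $\cd_1|_T$ or of $\cd_2|_T$, which has four orders, and since $\cd|_T$ is Condorcet it has at most four, hence exactly four; if $T=\{1,n,x\}$, copiousness of $\cd_1$ and $\cd_2$ forces both orderings of $\{1,x\}$ and of $\{n,x\}$ to occur, so again all four of $1xn,x1n,nx1,xn1$ are present.
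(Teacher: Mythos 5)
Your proposal is correct and follows essentially the same route as the paper: case analysis on triples (interior triples via $\ce$, triples containing exactly one of $1,n$ via the no-right-obstruction hypotheses, the triple $\{1,x,n\}$ via $xN_{\{1,x,n\}}3$), then maximality from the forced never-condition on $\{1,x,n\}$ supplied by ampleness. The one substantive difference is that you explicitly flag and close the gap the paper elides --- that $\cd_2|_{\{i,j,n\}}\cup\{ijn\}$ and $\cd_2|_{\{i,j,n\}}\cup\{jin\}$ each being Condorcet does not formally imply the same for $\cd_2|_{\{i,j,n\}}\cup\{ijn,jin\}$ --- by the finite check that the surviving never-condition can be taken from $\{nN1,\,nN2,\,iN3,\,jN3\}$, which is a worthwhile refinement of the published argument.
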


\begin{proof}
Let us prove $\cd$ is Condorcet. Let $i,j,k\in [n]\setminus \{1,n\}$. Then $\cd|_{\{i,j,k\}}$ is Condorcet as $\ce$ is. If $i,j\in [n]\setminus \{1,n\}$, consider the triple $\{i,j,n\}$. Since $n$ is not a right obstruction to both swaps $ij\to ji$ and $ji\to ij$ in $\cd_2$ we have $ijn$ and $jin$ compatible with $\cd_2|_{\{i,j,n\}}$, that is $\cd_2|_{\{i,j,n\}}\cup \{ijn,jin\}$ is Condorcet. However, only these two orders can be additionally added from $\cd_1$ with $n$ at the bottom, that is $\cd|_{\{i,j,n\}} \subseteq \cd_2|_{\{i,j,n\}}\cup \{ijn,jin\}$ is Condorcet.
If $i\in [n]\setminus \{1,n\}$, then the triple $\{1,i,n\}$ is Condorcet since it satisfies $iN_{\{1,i,n\}}3$.

If $\cd_1$ and $\cd_2$ are maximal, we cannot add to $\cd$ an order ending with 1 or $n$. Let $1<j<n$. Then due to ampleness of $\cd_1$  orders $1jn$, $j1n$ are contained in $\cd$ and due to ampleness of $\cd_2$ orders $jn1$, $nj1$ are contained in $\cd$, hence $\cd$ is copious and satisfies $jN_{\{1,j,n\}}3$. Hence adding to $\cd$ an order with $j$ as its last alternative, would violate $iN_{\{1,i,n\}}3$. 

Suppose $\cd_1$ and $\cd_2$ are copious. If $\{i,j,k\}\subset [n-1]$, then $|\cd|_{\{i,j,k\}}|=4$ as $\cd_1$ is copious. Similarly, $|\cd|_{\{i,j,k\}}|=4$ if 
$\{i,j,k\}\subset [n]\setminus \{1\}$. If $\{i,j,k\}\supset \{1,n\}$, then $|\cd|_{\{i,j,k\}}|=4$ as we have seen already. This proves the theorem.
\end{proof}

This theorem gives us only  a sufficient set of conditions. As we will see the $nl$-composition can be maximal without $\cd_1$ or $\cd_2$ being maximal.  This, in particular, means that $nl$-joins construction does not always produce a Condorcet domain.

\section{Domains on four alternatives obtained by never-last joins}

We illustrate the result using the classification by \cite{tobias16}, this paper will be referred as TD. He determined all 18 maximal Condorcet domains on four alternatives up to an isomorphism and flip-isomorphism. Using the order given by Table 2 in their paper we denote those maximal domains as $\cd_{4,1},\ldots, \cd_{4,18}$. We will show that nine out of 18 domains in that list are obtained by using $nl$-composition. These domains are: $\cd_{4,4}$ (The Single Peaked); $\cd_{4,5}$ (The Crab); $\cd_{4,6}$ (The Sun); $\cd_{4,7}$ (The Half-Crab-Half-Sun); $\cd_{4,11}$ (Boring I), $\cd_{4,16}$ (Boring VI), $\cd_{4,17}$ (Boring VII); $\cd_{4,2}$ (The Snake); $\cd_{4,3}$ (The Broken Snake).\footnote{The names of the domains are far from being optimal.}

\begin{example}[The Single-peaked in TD]
\label{ex:4-SP}
Let us consider a unique maximal single-peaked domain $\mathcal{SP}(\triangleleft,A)$ on four alternatives with $A=\{1,2,3,4\}$ relative to spectrum $1\triangleleft 2\triangleleft 3\triangleleft 4$:
\begin{equation*}
\label{eq:single-peaked}
\left[\begin{array}{cccc|cccc}  1&2&2&3&2&3&3&4\\2&1&3&2&3&2&4&3\\3&3&1&1&4&4&2&2\\\hline 4&4&4&4&1&1&1&1  
\end{array}\right]
\end{equation*}
whose graph is presented on Figure~\ref{fig:SP-4alts}.  

\begin{figure}[H]
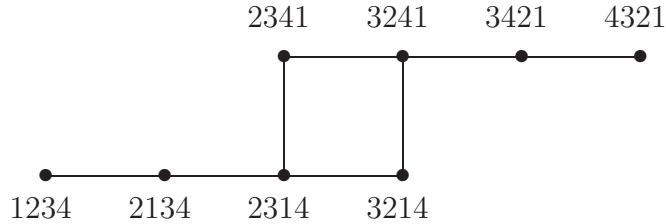

\centering
$$\bpic{-20}{250}{-20}{50}
\bull{0}{0}
\put{$1234$} at -2 -12
\hl{45}{0}{0}
\bull{45}{0}
\put{$2134$} at 43 -12
\put{$2314$} at 88 -12
\put{$3214$} at 133 -12
\bull{135}{45}
\bull{135}{0}
\bull{180}{45}
\bull{225}{45}
\vl{45}{135}{0}
\hl{90}{45}{0}
\bull{90}{0}
\vl{45}{90}{0}
\bull{90}{45}
\hl{135}{90}{45}
\put{$2341$} at 88 60
\put{$3241$} at 133 60
\put{$3421$} at 178 60
\put{$4321$} at 223 60
\epic$$
\caption{Graph of the single-peaked domain $\mathcal{SP}(\triangleleft,A)$ on four alternatives}
\label{fig:SP-4alts}
\end{figure}

We have $\mathcal{SP}(\triangleleft,A)=\cd(\mathcal{N})$, where 
\[
\mathcal{N}=\{bN_{\{a,b,c\}}3,\  bN_{\{a,b,d\}}3,\  cN_{\{a,c,d\}}3,\ cN_{\{b,c,d\}}3 \}.
\]
This domain will also be denoted as $\cd_{4,1}$. We have
\[
\cd_{4,4}=\cd_{3,3}(1,2,3)\diamond  \cd_{3,3}(2,3,4).
\]
\end{example}

\begin{example}[The Crab in TD]
\label{ex:4-strange}
Let us consider an  Arrow's single-peaked maximal Condorcet domain $\cd_{4,5}$ on four alternatives:
\begin{equation*}
\label{eq:single-peaked}
\left[\begin{array}{cccc|cccc}
1&2&2&3&2&3&2&4\\
2&1&3&2&3&2&4&2\\
3&3&1&1&4&4&3&3\\
\hline
4&4&4&4&1&1&1&1
\end{array}\right]
\end{equation*}
\[
\cd_{4,5}= \cd_{3,3}(1,2,3)\diamond  \cd_{3,3}(3,2,4).
\]
whose graph is presented on Figure~\ref{fig:4-strange1}. We have $\cd_{4,2}=\cd(\mathcal{N})$, where 
\[
\mathcal{N}=\{2N_{\{1,2,3\}}3,\  2N_{\{1,2,4\}}3,\  3N_{\{1,3,4\}}3,\ 2N_{\{2,3,4\}}3 \}
\]
and $\cd_{4,2}$ is copious.  This domain is not single-peaked (for example, because it does not have two completely reversed orders).

\begin{figure}[H]
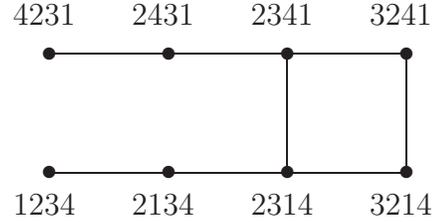

\centering
$$\bpic{-20}{180}{-20}{50}
\bull{0}{0}
\put{$1234$} at -2 -12
\hl{45}{0}{0}
\bull{45}{0}
\put{$2134$} at 43 -12
\put{$2314$} at 88 -12
\put{$3214$} at 133 -12
\bull{135}{45}
\bull{135}{0}
\bull{0}{45}
\bull{45}{45}
\vl{45}{135}{0}
\hl{90}{45}{0}
\bull{90}{0}
\vl{45}{90}{0}
\bull{90}{45}
\hl{45}{90}{45}
\hl{90}{0}{45}
\put{$2431$} at 43 60
\put{$3241$} at 133 60
\put{$2341$} at 88 60
\put{$4231$} at -2 60
\epic$$
\caption{\label{fig:4-strange1} Graph of an Arrow's single-peaked domain $\cd_{4,5}$.}
\end{figure}
\end{example}

\begin{example}[The Sun in TD]
Let us consider the following maximal Condorcet domain on four alternatives:
\begin{equation*}
\label{eq:sun}
\left[\begin{array}{cccc|cccc}
3&3&2&2&3&3&2&2\\
1&2&3&1&4&2&3&4\\
2&1&1&3&2&4&4&3\\
\hline
4&4&4&4&1&1&1&1
\end{array}\right]
\end{equation*}
It is defined by the following complete set of never-conditions:
\[
1N_{\{1,2,3\}}1,\quad   2N_{\{1,2,4\}}3, \quad 3N_{\{1,3,4\}}3, \quad 4N_{\{2,3,4\}}1.
\]
It is copious but does not have maximal width. The median graph of this domain is here:
\begin{figure}[H]
\centering
$$\bpic{-20}{180}{-20}{50}
\bull{0}{0}
\put{$2134$} at -2 -12
\put{$2314$} at 43 -12
\put{$2341$} at 88 -12
\put{$2431$} at 133 -12
\bull{45}{0}
\bull{135}{45}
\bull{135}{0}
\bull{0}{45}
\bull{45}{45}
\bull{90}{0}
\bull{90}{45}
\vl{45}{45}{0}
\vl{45}{90}{0}
\hl{45}{0}{0}
\hl{90}{45}{0}
\hl{45}{90}{45}
\hl{90}{0}{45}
\put{$3124$} at -2 60
\put{$3214$} at 43 60
\put{$3421$} at 133 60
\put{$3241$} at 88 60
\epic$$
\caption{\label{fig:4-ladder} Graph of $\cd_{4,6}$.}
\end{figure} 
In terms of $nl$-joins
\[
\cd_{4,6}= \cd_{3,1}(2,1,3)\diamond  \cd_{3,1}(2,4,3).
\]
\end{example}

\begin{example}[Half Crab half Sun domain in TD] 
Let us consider the following maximal Condorcet domain on four alternatives:
\begin{equation*}
\label{eq:sun}
\left[\begin{array}{cccc|cccc}
3&3&2&2&4&3&3&2\\
1&2&3&1&3&4&2&3\\
2&1&1&3&2&2&4&4\\
\hline
4&4&4&4&1&1&1&1
\end{array}\right]
\end{equation*}

It is defined by the following complete set of never-conditions:
\[
3N_{\{1,2,3\}}1,\quad   1N_{\{1,2,4\}}3, \quad 1N_{\{1,3,4\}}3, \quad 2N_{\{2,3,4\}}3.
\]
It is copious but does not have maximal width.
\begin{figure}[H]
\centering
$$\bpic{-20}{180}{-20}{50}
\bull{0}{0}
\put{$2134$} at -2 -12
\put{$2314$} at 43 -12
\put{$2341$} at 88 -12
\bull{45}{0}
\bull{135}{45}
\bull{0}{45}
\bull{45}{45}
\bull{90}{0}
\bull{90}{45}
\bull{180}{45}
\vl{45}{45}{0}
\vl{45}{90}{0}
\hl{45}{0}{0}
\hl{45}{45}{0}
\hl{90}{90}{45}
\hl{90}{0}{45}
\put{$3124$} at -2 60
\put{$3214$} at 43 60
\put{$3241$} at 88 60
\put{$3421$} at 133 60
\put{$4321$} at 178 60
\epic$$
\caption{\label{fig:b-ladder} Graph of $\cd_{4,7}$.}
\end{figure} 
In terms of $nl$-joins
\[
\cd_{4,7}= \cd_{3,1}(2,1,3)\diamond  \cd_{3,1}(2,3,4).
\]
\end{example}

\begin{example}[Boring I in TD]
Let us consider the following maximal Condorcet domain for $m=4$ alternatives:
\begin{equation*}
\label{eq:single-peaked}
\left[\begin{array}{cccc|cccc}  1&2&2&3&2&3&4&4\\2&1&3&2&3&2&2&3\\3&3&1&1&4&4&3&2\\\hline 4&4&4&4&1&1&1&1  
\end{array}\right]
\end{equation*}
which satisfies never-conditions
\[
2N_{\{1,2,3\}}3, \quad 2N_{\{1,2,4\}}3, \quad 3N_{\{1,3,4\}}3, \quad 4N_{\{2,3,4\}}2.
\]
In terms of $nl$-joins
\[
\cd_{4,11}= \cd_{3,3}(1,2,3)\diamond  \cd_{3,2}(4,2,3).
\]
Figure~\ref{fig:nl-composition-b1} shows the graph of this domain.
\begin{figure}[H]
\centering
$$\bpic{-20}{180}{-10}{95}
\bull{0}{0}
\put{$1234$} at -2 -12
\hl{45}{0}{0}
\bull{45}{0}
\put{$2134$} at 43 -12
\put{$2314$} at 88 -12
\put{$3214$} at 133 -12
\bull{135}{45}
\bull{135}{0}
\bull{90}{90}
\bull{135}{90}
\hl{90}{45}{0}
\bull{90}{0}
\plot 90 45 90 90 /
\plot 135 45 135 90 /
\bull{90}{45}
\hl{45}{90}{45}
\hl{45}{90}{90}
\put{$2341$} at 73 48
\put{$3241$} at 153 48
\put{$4231$} at 73 92
\put{$4321$} at 153 92
\setlinear 
\plot 135 0 135 45 /
\plot 90 0 90 45 /
\epic$$
\caption{\label{fig:nl-composition-b1} Graph of $\cd_{4,11}$.} 
\end{figure}
\end{example}


\begin{example}[Boring VI in TD]
Let us consider the following maximal Condorcet domain for $m=4$ alternatives:
\begin{equation*}
\label{eq:boring6}
\left[\begin{array}{cccc|cccc}  1&1&2&3&3&2&4&4\\2&3&3&2&2&3&3&2\\3&2&1&1&4&4&2&3\\\hline 4&4&4&4&1&1&1&1  
\end{array}\right]
\end{equation*}
which satisfies never-conditions
\[
1N_{\{1,2,3\}}2, \quad 2N_{\{1,2,4\}}3, \quad 3N_{\{1,3,4\}}3, \quad 4N_{\{2,3,4\}}2.
\]
Figure~\ref{fig:nl-composition-b6} shows the median graph of the domain. 
In terms of $nl$-joins
\[
\cd_{4,16}= \cd_{3,3}(1,2,3)\diamond  \cd_{3,2}(4,2,3).
\]
\begin{figure}[H]
\centering
$$\bpic{-50}{180}{-10}{60}
\bull{0}{0}
\put{$1234$} at -2 -12
\put{$1324$} at -2   57
\put{$3214$} at  43   57
\hl{45}{0}{0}
\bull{45}{0}
\put{$2314$} at 43 -12
\put{$2341$} at 88 -12
\put{$4231$} at 133 -12
\bull{135}{45}
\bull{135}{0}
\hl{90}{45}{0}
\bull{90}{0}
\bull{90}{45}
\hl{45}{90}{45}
\put{$3241$} at 88 57
\put{$4321$} at 133 57 
\setlinear 
\plot 135 0 135 45 /
\plot 90 0 90 45 /
\plot 0 0 0 45 90 45 /
\plot 45 0 45 45 /
\bull{0}{45}
\bull{45}{45}
\epic$$
\caption{\label{fig:nl-composition-b6} Graph of $\cd_{4,16}$.} 
\end{figure}
\end{example}

\newpage

\begin{example}[Boring VII in TD]
Let us consider the following maximal Condorcet domain for $m=4$ alternatives:
\begin{equation*}
\label{eq:single-peaked}
\left[\begin{array}{cccc|cccc}  2&3&2&3&2&3&4&4\\1&1&3&2&3&2&2&3\\3&2&1&1&4&4&3&2\\\hline 4&4&4&4&1&1&1&1  
\end{array}\right]
\end{equation*}
which satisfies never-conditions
\[
1N_{\{1,2,3\}}1, \quad 2N_{\{1,2,4\}}3, \quad 3N_{\{1,3,4\}}3, \quad 4N_{\{2,3,4\}}2.
\]
Figure~\ref{fig:boring7} shows the median graph of the domain. 
In terms of $nl$-joins
\[
\cd_{4,17}= \cd_{3,1}(2,1,3)\diamond  \cd_{3,2}(4,2,3).
\]
\end{example}

Figure~\ref{fig:boring7} shows the median graph of the domain. 
In terms of $nl$-joins
\begin{figure}[h]
\centering
$$\bpic{40}{180}{-10}{95}
\bull{180}{0}
\hl{45}{135}{0}
\bull{45}{0}
\put{$2134$} at 43 -12
\put{$2314$} at 88 -12
\put{$3214$} at 133 -12
\put{$3124$} at 180 -12
\bull{135}{45}
\bull{135}{0}
\bull{90}{90}
\bull{135}{90}
\hl{90}{45}{0}
\bull{90}{0}
\plot 90 45 90 90 /
\plot 135 45 135 90 /
\bull{90}{45}
\hl{45}{90}{45}
\hl{45}{90}{90}
\put{$2341$} at 73 48
\put{$3241$} at 153 48
\put{$4231$} at 73 92
\put{$4321$} at 153 92
\setlinear 
\plot 135 0 135 45 /
\plot 90 0 90 45 /
\epic$$
\caption{\label{fig:boring7} Graph of $\cd_{4,17}$.} 
\end{figure}

However, the converse of Theorem~\ref{thm:maxmax} is not correct as the following example shows. 

\begin{example}[The snake in TD]
Let us consider a single-crossing maximal domain $\cd_{4,2}$ whose orders are represented as columns of the following matrix
\begin{equation*}
\label{eq:single-crossing7}
\left[\begin{array}{ccc|cccc}  1&2&2&2&2&4&4\\2&1&3&3&4&2&3\\3&3&1&4&3&3&2\\\hline 4&4&4&1&1&1&1  \end{array}\right]
\end{equation*}
We see that 
\[
\cd_{4,2}=\ce\diamond  \cd_{3,1}(2,3,4),
\] 
where 
$
\ce=\{123, 213, 231\}\subseteq \cd_{3,3}(1,2,3)\cap \cd_{3,1}(2,3,1).
$
and $\cd_1$ is not maximal. This happens due to $4$ being  an obstruction to swap $23\to 32$ in $\cd_{3,1}(2,3,4)$ hence we cannot add order $321$ or $132$ to $\cd_1$ to make it maximal. 

The graph of the single-crossing domain is a line graph shown on a Figure~\ref{fig:SC-4alts}.
 \begin{figure}[H]
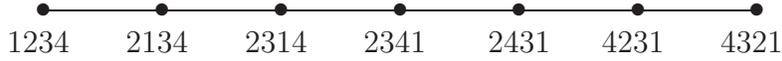

\centering
$$\bpic{-20}{280}{-20}{5}
\bull{0}{0}
\bull{45}{0}
\bull{90}{0}
\bull{135}{0}
\bull{180}{0}
\bull{225}{0}
\bull{270}{0}
\hl{270}{0}{0}
\put{$1234$} at -2 -12
\put{$2134$} at 43 -12
\put{$ 2314$} at 88 -12
\put{$2341$} at 133 -12
\put{$2431$} at 180 -12
\put{$4231$} at 223 -12
\put{$4321$} at 268 -12
\epic$$
\caption{\label{fig:SC-4alts} Graph of single-crossing domain $SC_4$ on four alternatives}
\end{figure}
\end{example}

One other such example.

\begin{example}[The broken snake in TD]
Let us consider the maximal domain $\cd_{4,3}$ whose orders are represented as columns of the following matrix
\begin{equation*}
\label{eq:single-crossing7}
\left[\begin{array}{ccc|cccc}  1&3&3&3&3&2&4\\3&1&2&2&4&4&2\\2&2&1&4&2&3&3\\\hline 4&4&4&1&1&1&1  \end{array}\right]
\end{equation*}
We see that 
\[
\cd_{4,3}=\ce\diamond  \cd_{3,2}(3,2,4),
\]
 where 
\[
\ce=\{132, 312, 321\}\subseteq \cd_2=\{123,132,231,321\}.
\]
and $\ce$ is not maximal. This happens due to $4$ being  an obstruction to swap $32\to 23$ in $\cd_{3,2}(3,2,4)$ hence we cannot add order $123$ or $231$ to $\ce$ to make it maximal. 

\begin{figure}[h]
\centering
$$\bpic{-20}{180}{-10}{95}
\bull{45}{0}
\put{$3214$} at 43 -12
\put{$3124$} at 28 45
\put{$1324$} at 28 90
\put{$3241$} at 88 -12
\put{$3421$} at 133 -12
\bull{135}{45}
\bull{135}{0}
\bull{45}{45}
\bull{45}{90}
\hl{90}{45}{0}
\vl{90}{45}{0}
\bull{90}{0}
\bull{90}{45}
\hl{45}{90}{45}
\put{$2431$} at 90 57
\put{$4231$} at 135 57
\setlinear 
\plot 135 0 135 45 /
\plot 90 0 90 45 /
\epic$$
\caption{\label{fig:nl-composition-with_a_twist} Graph of $\cd_{4,3}$.} 
\end{figure}
\end{example}

\bibliographystyle{plainnat}
\bibliography{cps}
\end{document}